\documentclass[12pt,reqno]{amsart}
\setlength{\textheight}{23cm} \setlength{\textwidth}{16cm}
\setlength{\topmargin}{-0.8cm}
\setlength{\parskip}{0.3\baselineskip} \hoffset=-1.4cm

\usepackage{amssymb}
\usepackage{amsmath}
\usepackage{amsfonts}
\usepackage{graphicx}

\usepackage[colorlinks=true]{hyperref}
\hypersetup{urlcolor=blue, citecolor=red}

\newtheorem{theorem}{Theorem}[section]

\newtheorem{proposition}[theorem]{Proposition}
\newtheorem{remark}[theorem]{Remark}

\numberwithin{equation}{section}

\newcommand{\todoAA}[1]{\vspace{5 mm}\par \noindent
\framebox{\begin{minipage}[c]{0.95 \textwidth} \color{red}AA: \it \tt #1
\end{minipage}}\vspace{5 mm}\par}

\begin{document}

\title{Un-reduction in field theory, with applications}

\author[A. Arnaudon]{A. Arnaudon}

\address{aa, ddh: Department of Mathematics, South Kensington Campus, Imperial College London, London SW7 2AZ, United Kingdom}

\email{alexis.arnaudon@imperial.ac.uk  \,\, ,\,\,
d.holm@imperial.ac.uk}

\author[M. Castrill\'{o}n L\'{o}pez]{M. Castrill\'{o}n L\'{o}pez}

\address{mcl: ICMAT(CSIC-UAM-UC3M-UCM), Dept. Geometr\'{\i}a y
Topolog\'{\i}a, Facultad de Ciencias Matem\'{a}ticas, Universidad
Complutense de Madrid, 28040, Madrid, Spain}

\email{mcastri@mat.ucm.es}

\author[D.D. Holm]{D.D. Holm}


\keywords{Field Theory, image matching, Lagrangian, reduction,
symmetries}

\date{}

\begin{abstract}
The un-reduction procedure introduced previously in the
context of Mechanics is extended to covariant Field Theory. 
The new covariant un-reduction procedure is applied 
to the problem of shape matching of images 
which depend on more than one independent variable (for
instance, time and an additional labelling parameter). Other
possibilities are also explored: non-linear $\sigma$-models and the
hyperbolic flows of curves.
\end{abstract}

\maketitle

\section{Introduction}

Symmetry (i.e., invariance under a Lie group of transformations) 
greatly facilitates the study of variational problems, both for the
construction of explicit solutions of the variational equations and
for their qualitative analysis. A rich variety of information arises 
from Lie symmetry of variational problems, especially when they 
are formulated geometrically. For example, a vast,
interesting literature exists on the topic of reduction by symmetry. 
In reduction by symmetry, the idea is to take advantage of the
group of symmetry transformations to reduce the dimension of the configuration
and phase spaces of the variational problem, thereby making the problem easier to handle. 
When performing such a reduction, one must also provide a method
of reconstructing the solutions of the original, unreduced, variational problem
from solutions of the reduced problem, which sometimes requires
additional compatibility conditions.

Surprisingly, there are nice instances where this procedure can be
used backwards. For example, suppose a variational problem looks 
complicated, but it may be recognised as the reduction by a certain group of symmetries 
of a variational problem formulated in a bigger space. 
Although the dimension of the corresponding un-reduced
configuration space may be larger, the equations or the space itself
may be simpler. Furthermore, the existence of the groups of
symmetries may shed light on the nature of the initial equations.
In this situation, one should notice that reduction by symmetries
changes the structure of the equations. For example, in the
Lagrange-Poincar\'{e} reduction procedure (when the configuration
space is a manifold $Q$ on which a Lie symmetry group $G$ acts
properly, see \cite{CaRa}, \cite{CeMaRa2001}, \cite{GBRa}), the
reduced variational equations split into two different types. The first type
is an Euler-Lagrange operator coupled with a gyroscopic
term (the curvature of a chosen connection $\mathcal{A}$ in the
bundle $Q\rightarrow Q/G$). The second type is a
conservation law. In order to have a free variational problem in
the reduced space, one needs to introduce forces into the un-reduced
principle so that the equations will decouple. The
choice of this force can be made by splitting the Lagrangian 
into horizontal and vertical parts with respect to the connection
$\mathcal{A}$. This is the un-reduction construction given in
\cite{BEBH} for variational problems of a particle (Mechanics) and
generalized in this article to a covariant field theoretical
setting. In particular, we also explore the topological situations
which arise when the parameter manifold is not longer simply connected.

The main motivation of \cite{BEBH} was shape matching: given two
plane shapes $S_{1},S_{2}\in \mathrm{Sh}(\mathbb{R}^{2})$,
understood as closed curves in $\mathbb{R}^{2}$, one seeks
the optimal path of shapes joining $S_{1}$ and $S_{2}$. This problem
is analysed in \cite{CoHo},\cite{MiMu} and references
therein. The space $\mathrm{Sh}(\mathbb{R}^{2})$ is a complicated
infinite dimensional manifold. However, we have 
$\mathrm{Sh}(\mathbb{R}^{2})=Q/G$, where
$G=\mathrm{Diff}^{+}(S^{1})$, and $Q$ is the space
$\mathrm{Emb}^{+}(S^{1},\mathbb{R}^{2})$ of positively embedded
parametrizations of the circle in the plane, which is a much easier
functional space than the unparameterised planar curves in $\mathrm{Sh}(\mathbb{R}^{2})$. 
By means of conveniently chosen forces, one may use un-reduction to lift the
problem of shape matching to
$\mathrm{Emb}^{+}(S^{1},\mathbb{R}^{2})$. In this article, this
situation becomes richer. In particular, we can study
matching of shapes depending on, say, two independent variables. A
primary case is where the shapes depend on time (time
evolution) and another parameter (space evolution) labelling a set of 
subjects in a research study. This so-called spatiotemporal
analysis of shapes is a recent and active field of research. For
details, the reader may consult \cite{Dur}, \cite{Ger},
\cite{Pey}. In spatiotemporal shape analysis, there are two main approaches.
These are the time-specific and subject-specific approaches, 
indicating the variable which parameterises the
evolution in shape comparisons; either for a certain subject at a sequence of times,
or for a sequence of subjects at a certain time. 
This spatio-temporal construction is illustrated in Fig. \ref{fig:humpty}. 
Note that the $x$ and $t$ variables have interchangeable meanings.
A more complex construction is found in \cite{Dur} where the authors build a
subject-specific approach together with a time-reparametrization,
with interesting applications to the compared evolution of
\emph{Homo Sapiens Neanderthalensis }and \emph{Homo Sapiens
Sapiens}, or bonobos and apes. The methodology is meant to
couple with statistical analysis. The configuration
space of this approach is $\mathrm{Diff}(\mathbb{R}^2)$ together
with the time reparametrization in $\mathrm{Diff}(\mathbb{R})$.

\begin{figure}[htpb]
	\centering
	\includegraphics{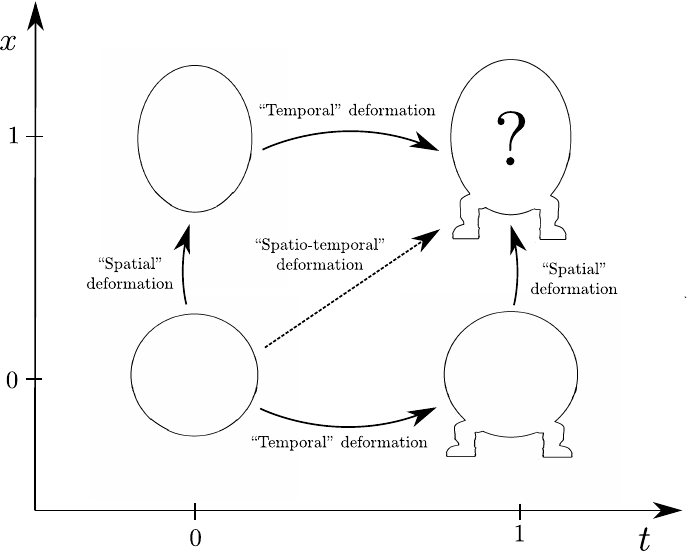}
	\caption{This diagram illustrates the spatio-temporal deformation of curves in $\mathrm{Sh}(S^1,\mathbb R^2)$ that is considered in this work. 
	The combination of spatial and temporal deformations, where the precise meaning of space and time has to be defined depending on the context, allows for a simultaneous deformation of a curve along two parameters. 
	The solution is then a function of $(x,t)$ which minimises an given energy functional. 
In the simplest case of quadratic energy functional, the solution is known as being a harmonic map. }
	\label{fig:humpty}
\end{figure}

The un-reduction procedure in $\mathrm{Emb}^+(S^1,\mathbb{R}^2)$
that we propose here provides \emph{simultaneous} evolution of both
types in a single system of partial differential equations. We
expect this combined evolution to provide more accurate and
versatile information for the problem of spatiotemporal curve
matching. Furthermore, we introduce a certain
convenient Riemannian metric in the space of embeddings depending
on derivatives of the curve (a Sobolev metric) which seems to be
appropriate for the evolution in $\mathrm{Sh}(\mathbb{R}^{2})$ and
$\mathrm{Emb}^{+}(S^{1},\mathbb{R}^{2})$ (see \cite{BBMM},
\cite{BBM}), spaces which possess some other natural but
pathological metrics. In \cite{ArCaHo}, the authors further 
investigated this approach with a simple numerical test in the classical 
mechanical setting, but more work is needed to obtain a reliable scheme.  

Because the theory is quite general, the range of potential applications is
wide. Apart from the motivation of curve matching, we point out two other
completely different areas of mathematical physics where 
covariant un-reduction is hidden. For example, $\sigma $-models in
homogeneous spaces $G/H$ may be written as an
un-reduction problem in $G$. Interestingly, we may sometimes
combine un-reduction with Euler-Poincar\'{e} reduction to the Lie
algebra $\mathfrak{g}$ to get a new set of equations. These
equations are already in the literature, but we incorporate full
geometric meaning to them with this concatenation of un-reduction
 and reduction, a situation intimately related with dual pairs (see
\cite{HoVi}). Finally, covariant un-reduction is also applied to
hyperbolic curve evolution, a baby geometric construction of other
more sophisticated geometric flow equations.

\subsection*{Plan and Main Contents of the Paper}
Section \ref{LPred-sec} reviews the basic concepts of covariant Lagrange-Poincar\'e reduction,
before formulating the main result of the paper, which is the Un-reduction Theorem \ref{mainth},
in Section \ref{unreduct-sec}.  Section \ref{apps-sec} provides examples of explicit applications
of the Un-reduction Theorem for (i) curve matching in the plane; (ii) nonlinear sigma model and 
(iii)  hyperbolic curve evolution. Each of these examples demonstrates the method of un-reduction 
and illustrates different ways to take advantage of the geometry of the reduced space. 

\section{Covariant Lagrange-Poincar\'{e} reduction}\label{LPred-sec}

The main result of the paper will be formulated as Theorem \ref{mainth} in the next section. 
This section first reviews the basic concepts of  
covariant Lagrange-Poincar\'e reduction. The version of this
reduction in Mechanics takes place when a Lie group of symmetries
$G$ acts properly on the configuration manifold $Q$ of the
variational problem under study (for example see
\cite{CeMaRa2001}). In the field theoretical setting, the group of
symmetries acts on a fibre bundle $\pi : E \to N$ by vertical
diffeomorphisms, that is, actions such that $\pi (y\cdot g)=\pi
(y), \forall y\in E, g\in G$. We refer the reader to \cite{CaRa}
and \cite{GBRa} for the exposition of the theoretical framework of
this procedure. For our purposes, in this article we have adapted
these results as follows. On one hand we just consider trivial
bundles $Q\times N \to N$, so that the dynamical objects of
interest are mappings from $N$ to $Q$ and the problem is defined
by a first order Lagrangian defined in the first jet space
$J^1(N,Q)$. This simplification is mainly done for convenience in
the applications, though the theoretical core of this work can be
done in full generalities. On the other hand, we need to incorporate
forces to our scheme, which will induce new terms in the equations in
a straightforward manner.

\subsection{Background material}

For the standard notions on bundles and connections, the reader
can go, for example, to \cite{KN}; and to \cite{Gim} for the basic
definitions on geometric variational calculus in bundles and field
theories.

Let $\pi : Q\rightarrow Q/G=\Sigma$ be a $G$-principal bundle
where the action $R_g : Q \to Q$, $g\in G$, is assumed to be on
the right. Recall that a principal connection $\mathcal{A}$ is a
$\mathfrak{g}$-valued $1$-form in $Q$ such that the equivariance
property $R_{g}^{\ast}\mathcal{A}
=\mathrm{Ad}_{g^{-1}}\circ\mathcal{A}$ holds, and
$\mathcal{A(\xi}_{Q})=\mathcal{\xi}$, for any
$\xi\in\mathfrak{g}$, where $\xi_{Q}$ is the infinitesimal
generator of the action, i.e.,
$\xi_{Q}(q):=d/d\varepsilon|_{\varepsilon=0}R_{\exp
(\varepsilon\xi)}(q)$. This definition is equivalent to a choice
of $G$-invariant splitting of the tangent bundle $TQ$ into
horizontal and vertical parts
\begin{align*}
    T_{q}Q=H_{q}Q\oplus V_{q}Q,
\end{align*}
for $q\in Q$, where $V_{q}Q=\{(\xi_{Q})_{q} :\xi\in\mathfrak{g}\}$
and $H_{q}Q=\ker\mathcal{A}$. We denote by $p^{h}:TQ\rightarrow
HQ$ and $p^{v}:TQ\rightarrow VQ$ the induced projections. The
curvature of $\mathcal{A}$ is defined to be the
$\mathfrak{g}$-valued two form $\mathcal{B}=d\mathcal{A}+[\mathcal
A,\mathcal{A}]$ and satisfies the equivariance property
$(R_g)^*\mathcal{B}=\mathrm{Ad}_{g^{-1}}\circ \mathcal{B}$. One
can also define a 2-form in $\Sigma$, but taking values in the
adjoint bundle $\mathfrak{\tilde{g}}=(Q\times\mathfrak{g})/G$ as
\begin{align*}
        \mathcal{\bar{B}}(u_{\rho},w_{\rho})=[q,\mathcal{B}(u^{h}_{q},u^{h}_{q})]_{G},\qquad
        u_{\rho},w_{\rho}\in T_{\rho} \Sigma ,
\end{align*}
where $u^{h}_q$ stands for the unique tangent vector (the
horizontal lift of $u_q$ with respect to $\mathcal{A}$) in
$H_{q}Q$ such that $T\pi(u^{h}_q)=u_\rho$. The definition does not
depend on $q\in \pi ^{-1}(\rho )$ because of the equivariant
behaviour of the curvature.

Let $N$ be an oriented manifold endowed with a volume form
$\mathbf{v}$ and consider a Lagrangian function
$L:J^{1}(N,Q)\rightarrow\mathbb{R}$ defined in the $1$-jet space
of mappings $s:N\rightarrow Q$. As the jet space $J^1 (N,Q)$ can
be naturally identified with $T^*N\otimes TQ$, we will use both
representations of this space in the following. We assume that $L$
is invariant with respect to the lifted action of $G$ in
$J^{1}(N,Q)$, defined as
\begin{align*}
    R^{(1)}_g (j^1 _x s):= j^1 _x(R_g \circ s)
\end{align*}
for $g\in G$ and any (local) mapping $s$. We can thus drop $L$ to
the quotient to obtain a reduced Lagrangian function
\begin{align*}
    \ell:J^{1}(N,Q)/G\simeq T^{\ast}N\otimes(TQ)/G\longrightarrow \mathbb{R}.
\end{align*}
If we fix a principal connection $\mathcal{A}$ of the bundle $Q\to
Q/G$, we have a diffeomorphism
\begin{align*}
        (TQ)/G  & \longrightarrow T\Sigma\oplus\mathfrak{\tilde{g}}\\
        ([v_{q}]_{G})  &\mapsto (T\pi( v_{q}),[q,\mathcal{A}(v_{q})]_G),
\end{align*}
so that the reduced phase space decomposes as
\begin{align*}
    (J^1(N,Q))/G=T^*N\otimes (TQ)/G \cong T^*N\otimes (T\Sigma \oplus
    \widetilde{\mathfrak{g}}) \cong J^1(N,\Sigma) \oplus (T^*N \otimes
    \widetilde{ \mathfrak{g}}),
\end{align*}
so that the reduced Lagrangian can then be written as
\begin{align*}
    \ell:J^{1}(N,\Sigma)\oplus(T^{\ast}N\otimes\mathfrak{\tilde{g}})\rightarrow\mathbb{R}.
\end{align*}

In the following sections, we will work with variational
principles including a force term, that is, a map
$F:J^{1}(N,Q)\rightarrow T^*Q$. The connection $\mathcal{A}$
splits the cotangent bundle $T^*Q = V^*Q \oplus H^*Q$ and we can
consider  the decomposition $F=F^{h}+F^{v}$ where
$F^{h}=p^{h}\circ F$ and $F^{v}=p^{v}\circ F$ with $p^v$ and $p^h$
denoting the projections of $V^*Q$ and $H^*Q$ respectively. We use
the same notation as for the projection of the tangent bundle as
no confusion can occur. If in addition $F$ is $G$-equivariant with
respect to the action of $G$ in both the source and target spaces,
we can drop $F^{h}$ and $F^{v}$ to $J^1(N,Q)/G$ as
\begin{align*}
        f^{h}:J^{1}(N,\Sigma)\oplus(T^*N\otimes \mathfrak{\tilde{g}})\rightarrow T^*\Sigma\ \ \mathrm{and} \ \
    f^{v}:J^{1}(N,\Sigma)\oplus(T^*N \otimes
    \mathfrak{\tilde{g})\rightarrow\tilde{g}^*}.
\end{align*}
Note that for $f^h$ we use $H^*Q/G\simeq T^* \Sigma$, and for
$f^v$ we have the isomorphism $VQ/G\simeq\mathfrak{\tilde {g}}$
given by $[(\xi_{Q})_{q}]_{G}\mapsto[q,\xi]_{G}$.

Finally, we recall the definition of the canonical momentum map
for the natural lift action of $G$ on $T^*Q$
\begin{eqnarray*}
        \mathbf J:T^*Q&\to &\mathfrak{g}^*\\
\langle \mathbf J (\alpha_q),\xi\rangle_{\mathfrak g\times\mathfrak g^*} & = &\langle \alpha_q,\xi_Q\rangle_{TQ\times TQ^*}
\end{eqnarray*}
where $\alpha_q\in T^*Q$, $\xi\in \mathfrak g$, and $\xi_Q\in TQ$.
We can extend $\mathbf{J}$ to a map
\begin{equation}
\label{momapp}
 \mathbf{J}: TN\otimes T^*Q \to
TN\otimes \mathfrak{g}^*,
\end{equation}
trivially in the factor $TN$. We note that, if we identify $TN
\simeq \wedge^{n-1}T^*N$, $n=\mathrm{dim}N$, by means of a fixed
volume form $\mathbf{v}$, the map $\mathbf{J}: TN\otimes T^*Q \to
TN\otimes \mathfrak{g}^*$ is the covariant momentum map in field
theories (cf. \cite{Gim}[Proposition 4.4]).

\subsection{Lagrange-Poincar\'e reduction}
\label{CLP}

In the sequel, we assume that $N$ is compact. If $N$ is not
compact, the domain of variations of the maps $s:N\to Q$ will be
assumed to be compactly supported. We project the variational
principle defined for $L$ from $J^1(N,Q)$ to its quotient
$J^1(N,Q)/G$. for $\ell : J^1 (N,\Sigma)\times
(T^*N\otimes\mathfrak{\tilde{g}})\to \mathbb{R}$. Critical
solutions are maps $\sigma : N \to T^*N\otimes
\mathfrak{\tilde{g}}$ which, moreover, project to maps $\rho : N
\to \Sigma = Q/G$ as $\rho = \pi_{\mathfrak{\tilde{g}}} \circ
\sigma$ according to the diagram
\begin{equation}
\label{maps}
\begin{array}{lll}
&  & \hspace{-0.21in}T^{\ast }N\otimes \mathfrak{\tilde{g}} \\
\multicolumn{1}{c}{} &
\multicolumn{1}{c}{\overset{\hspace{-0.07in}\sigma } {
\nearrow }} & \downarrow ^{\pi _{\mathfrak{\tilde{g}}}} \\
\multicolumn{1}{c}{N} & \multicolumn{1}{c}{\overset{\rho
}{\longrightarrow }} & \Sigma
\end{array}
\end{equation}
where $\pi_{\mathfrak{\tilde{g}}}:
T^{\ast}N\otimes\mathfrak{\tilde{g}}\rightarrow \Sigma$ is the
projection of the adjoint bundle forgetting the $T^*N$ factor. The
free variations of the initial problem provide a family of
constrained variations that define a new type of variational
equations. They are called Lagrange-Poincar\'e equations (see
\cite{CaRa}, \cite{GBRa}). The next theorem gives the
Lagrange-Poincar\'e reduction with forces $F$ which is obviously
the one in the literature when $F=0$.

\begin{theorem}[Covariant Lagrange-Poincar\'{e} reduction with
forces]\label{LP}
    Let \, \, $\pi : Q\rightarrow Q/G =\Sigma$ be a principal
    $G$-bundle, $\mathcal{A}$ be a principal connection on it and
    $N$ be a compact manifold oriented by a volume form $\mathbf{v}$.
    Given a map $s:N\rightarrow Q$, let $\sigma:N\rightarrow T^{\ast}N \otimes\mathfrak{\tilde{g}}$ be defined as
    \[
    \sigma(x)(\omega )= [s(x),\mathcal A(Ts\cdot (\omega))]_G,
    \]
    with $\omega\in T_xN, x\in N$; and let $\rho :N\to \Sigma$, $\rho(x) = [s(x)]_G = \pi_{\mathfrak{\tilde{g}}} \circ
    \sigma$.
    We consider a $G$-invariant Lagrangian
$L:J^{1}(N,Q)\rightarrow\mathbb{R}$ and a $G$-equivariant force
$F:J^{1}(N,Q)\rightarrow T^*Q$. Then the following points are
equivalent:

\begin{itemize}
\item[(1)] $s$ is a critical mapping of the variational principle
\begin{align}
    \delta\int_{N}L(s,j^{1}s)\mathbf{v}+\int_{N}\langle F(s,j^{1}s),\delta s\rangle\mathbf{v}=0
    \label{un-red-var}
\end{align}
with free variations $\delta s$.

\item[(2)] The Euler-Lagrange form of $L$ satisfies the relation
\begin{align*}
    \mathcal{EL}(L\mathbf{v})(j^2 s)=F.
\end{align*}

\item[(3)] $\sigma:N\rightarrow T^{\ast}N\otimes$
$\mathfrak{\tilde{g}}$
is a critical mapping of the variational principle%
\begin{align*}
        \delta\int_{N}\ell(j^{1}\rho,\sigma)\mathbf{v}+\int_N  \langle f^{h}(j^{1}\rho,\sigma),
        \delta\rho\rangle\mathbf{v}+\int_N\langle f^{v}(j^{1}\rho,\sigma),\eta\rangle \mathbf{v}=0,
\end{align*}
for variations of the form
$\delta\sigma=\nabla^{\mathcal{A}}\eta-[\sigma
,\eta]+\mathcal{\bar{B}}(\delta\rho,T\rho)\in
\tilde{\mathfrak{g}}$, where $\delta\rho\in T_\rho \Sigma$ is a
free variation of $\rho$ and $\eta$ is a free section of
$\mathfrak{\tilde{g}}\rightarrow \Sigma$.

\item[(4)] $\sigma$ satisfies the Lagrange-Poincar\'{e} equations
\begin{equation}
\left.
\begin{array}{c}
\mathcal{EL}_{\rho}(\ell\mathbf{v})=f^{h}-\left\langle \dfrac{\delta\ell}{\delta\sigma},i_{T\rho}\mathcal{\bar{B}}\right\rangle ,\\
\mathrm{div}^{\mathcal{A}}\dfrac{\delta\ell}{\delta\sigma}+\mathrm{ad}_{\sigma}^{\ast}\dfrac{\delta\ell}{\delta\sigma}=f^{v},
\end{array}
\right\}   \label{ff}
\end{equation}
where $\mathcal{EL}_{\rho}(\ell\mathbf{v}):J^{2}(N,\Sigma)\rightarrow
T^{\ast }\Sigma$ is the Euler-Lagrange form of $\ell$ with respect
to the variable $\rho$ only and $\mathrm{div}^{\mathcal{A}}$
stands for the covariant divergence operator defined by the
connection $\mathcal{A}$.
\end{itemize}
\end{theorem}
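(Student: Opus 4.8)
The plan is to establish the equivalences by a cycle of implications, grouping the four statements into two pairs that are easiest to link directly. The pair (1)$\Leftrightarrow$(2) is the unreduced half, and the pair (3)$\Leftrightarrow$(4) is the reduced half; the genuine content is the bridge (1)$\Leftrightarrow$(3), after which (2)$\Leftrightarrow$(4) follows for free. I would begin with (1)$\Leftrightarrow$(2): this is simply the fundamental lemma of the (field-theoretic) calculus of variations applied to the augmented action \eqref{un-red-var}. Expanding $\delta\int_N L(s,j^1s)\mathbf v$ by integration by parts over the compact $N$ (no boundary term, or compactly supported variations in the non-compact case) produces the pairing of $\delta s$ against the Euler--Lagrange form $\mathcal{EL}(L\mathbf v)(j^2 s)$, and adding the force pairing $\langle F(s,j^1s),\delta s\rangle$ gives that vanishing for all free $\delta s$ is equivalent to $\mathcal{EL}(L\mathbf v)(j^2 s)=F$. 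This step is routine and I would keep it brief.

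The main work is (1)$\Leftrightarrow$(3). Here I would use the fixed principal connection $\mathcal A$ to set up the correspondence between data on $Q$ and data on $\Sigma\oplus\widetilde{\mathfrak g}$: given $s:N\to Q$ one forms $\rho=[s]_G$ and $\sigma(x)(\omega)=[s(x),\mathcal A(Ts\cdot\omega)]_G$ exactly as in the statement, and $G$-invariance of $L$ (resp.\ $G$-equivariance of $F$) gives $L(s,j^1s)=\ell(j^1\rho,\sigma)$ (resp.\ the identifications of $F^h,F^v$ with $f^h,f^v$ under $H^*Q/G\simeq T^*\Sigma$ and $VQ/G\simeq\widetilde{\mathfrak g}$). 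The crucial point is the bookkeeping of variations: a free variation $\delta s$ of $s$ projects to a free variation $\delta\rho=T\pi(\delta s)$ of $\rho$ together with a section $\eta:=[s,\mathcal A(\delta s)]_G$ of $\widetilde{\mathfrak g}$, and conversely any such pair $(\delta\rho,\eta)$ lifts (using horizontal lift plus the vertical generator of $\eta$) to some $\delta s$. One then computes how $\sigma$ varies: differentiating $\sigma(x)(\omega)=[s,\mathcal A(Ts\cdot\omega)]_G$ along $\delta s$ and using the structure equation $\mathcal B=d\mathcal A+[\mathcal A,\mathcal A]$ to trade the mixed $d\mathcal A$ term for curvature yields precisely the constrained form $\delta\sigma=\nabla^{\mathcal A}\eta-[\sigma,\eta]+\bar{\mathcal B}(\delta\rho,T\rho)$. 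Feeding these identities into \eqref{un-red-var} shows the augmented action for $(s,\delta s)$ equals the augmented action for $(j^1\rho,\sigma)$ with the stated constrained variations, so criticality transfers both ways. This is the step I expect to be the main obstacle, essentially because it requires care with the covariant exterior derivative on the adjoint bundle, the equivariance of $\mathcal B$ descending to $\bar{\mathcal B}$, and the fact that the lift $\delta s\mapsto(\delta\rho,\eta)$ is onto with the claimed constrained image for $\delta\sigma$ — this is the covariant analogue of the Lagrange--Poincaré variation formula and is where all the geometry lives.

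Finally I would do (3)$\Leftrightarrow$(4): starting from the variational principle in (3), split the variation of $\int_N\ell(j^1\rho,\sigma)\mathbf v$ into the $\rho$-part and the $\sigma$-part, $\delta\ell=\langle\delta\ell/\delta\rho,\delta\rho\rangle+\langle\delta\ell/\delta\sigma,\delta\sigma\rangle$, substitute the constrained form of $\delta\sigma$, and integrate by parts. The $\nabla^{\mathcal A}\eta$ term produces, after covariant integration by parts, the covariant divergence $\mathrm{div}^{\mathcal A}(\delta\ell/\delta\sigma)$ paired with $\eta$; the $-[\sigma,\eta]$ term produces the $\mathrm{ad}^*_\sigma(\delta\ell/\delta\sigma)$ term; collecting the $\eta$-coefficient and using that $\eta$ is an arbitrary free section gives the second equation of \eqref{ff}, $\mathrm{div}^{\mathcal A}(\delta\ell/\delta\sigma)+\mathrm{ad}^*_\sigma(\delta\ell/\delta\sigma)=f^v$. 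The curvature term $\bar{\mathcal B}(\delta\rho,T\rho)$ contributes $\langle\delta\ell/\delta\sigma,i_{T\rho}\bar{\mathcal B}\rangle$ paired against the free $\delta\rho$, which together with the genuine Euler--Lagrange form $\mathcal{EL}_\rho(\ell\mathbf v)$ and the force $f^h$ yields the first equation of \eqref{ff}. Since (1)$\Leftrightarrow$(3) and (2)$\Leftrightarrow$(1) and (3)$\Leftrightarrow$(4), the chain closes and all four statements are equivalent; in particular (2)$\Leftrightarrow$(4) records that the unreduced Euler--Lagrange equation with force is the same object, written downstairs, as the Lagrange--Poincaré system. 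I would remark that the $F=0$ case recovers the standard covariant Lagrange--Poincaré theorem of \cite{CaRa},\cite{GBRa}, so only the additional force terms, which enter linearly and undifferentiated, need checking beyond the known result.
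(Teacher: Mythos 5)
Your proposal is correct and follows exactly the route the paper intends: the paper itself offers no written proof of Theorem \ref{LP}, simply noting that for $F=0$ it is the covariant Lagrange--Poincar\'e theorem of \cite{CaRa},\cite{GBRa} and that the force terms enter the equations in a straightforward (linear, undifferentiated) manner, which is precisely the observation you make at the end. Your fleshing out of the variation bookkeeping $\delta s\mapsto(\delta\rho,\eta)$, the constrained form of $\delta\sigma$ via the structure equation, and the covariant integration by parts is the standard argument from those references, so there is nothing to flag beyond routine sign conventions in the definition of $\mathcal{EL}$ and of the pairing with $F$.
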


\begin{remark}
        Given a solution of the Lagrange-Poincar\'e equations \eqref{ff},
the reconstruction of a solution of the initial variational
problem requires a compatibility condition. Given the map
$\sigma:N\rightarrow T^{\ast}N\otimes \mathfrak{\tilde{g}}$ and
the induced map $\rho:N\rightarrow\Sigma$, we consider the
pull-back principal bundle $\rho^{\ast}Q\rightarrow N$ and the
pull-back of the connection $\rho^{\ast}\mathcal{A}$. Recall that
the space of connections is an affine space modelled over the
vector space of $\mathfrak{\tilde{g}}$-valued $1$-forms in the
base manifold. We can thus consider the new connection
$\mathcal{A}^{\sigma}=\rho^{\ast}\mathcal{A} +\sigma$. Then, the
compatibility condition is
\begin{equation}
\label{comp}
\mathrm{Curv}(\mathcal{A}^{\sigma})=0.
\end{equation}
Indeed, if this condition is satisfied, and the manifold $N$ is
simply connected (see \S \ref{topo} below for some topological
issues), then the solutions $s:N\rightarrow Q$ are the integral
leaves or sections of that connection. See \cite{CaRa,GBRa} for more details.
\end{remark}

\section{The covariant un-reduction scheme} \label{unreduct-sec}
We are now almost ready to describe the un-reduction scheme for
Field Theories. As in the case of Mechanics (see \cite{BEBH}),
this construction requires that the Lagrangian is decomposed into
horizontal and vertical parts with respect to the chosen
connection $\mathcal{A}$.

\subsection{Vertical and horizontal Lagrangians}

We first give an expanded expression of the Euler-Lagrange form
$\mathcal{EL}(L):J^2(N,Q)\to T^*Q$ for an arbitrary Lagrangian
$L:J^{1}(N,Q)\rightarrow \mathbb{R}$ once a linear connection
$\overline{\nabla }$ in $Q$ has been fixed. For that, we consider the
horizontal lift $v\mapsto \hat{v}$ from $TQ$ to $T(T^*N\otimes
TQ)$ with respect to $\overline{\nabla}$ (the lift is done in the $TQ$
part only and is trivial in the $T^*N$ factor). Then we define
$\frac{\overline{\nabla}L}{ds}: J^1 (N,Q)\to T^*Q$ as
\begin{align*}
    \left \langle \frac{\overline{\nabla}L}{d s}(j^1 _x s),\delta s\right \rangle_{TQ\times T^*Q}:=\mathbf dL(j^1_xs)\cdot \widehat{\delta s},
\end{align*}
for any $\delta s \in T_qQ,\,q=s(x)$.
On the other hand, we define the vertical derivative $\frac{\partial L}{\partial j^1s} : J^1(N,Q)\to
TN\otimes T^*Q$ as
\[
\left \langle \frac{\partial L}{\partial j^1s}(j^1_x s),\omega\right\rangle :=\left.\frac{d}{d\epsilon}\right|_{\epsilon =0}L(j^1_x
s+\epsilon \omega),
\]
for any $\omega \in T_x ^*N\otimes T_qQ,\,q=s(x)$. The
Euler-Lagrange form is thus
\begin{equation}
\label{canonicalEL}
\mathcal{EL}(L)(j^2s)=\frac{\overline{\nabla}L}{ds}(j^1s)-\mathrm{div}^{\overline{\nabla},\mathbf{v}}\frac{\partial L}{\partial j^{1}s}(j^1s),
\end{equation}
where $\mathrm{div}^{\overline{\nabla},\mathbf{v}}$ stands for the
divergence operator defined by the volume form $\mathbf v$ and the
affine connection $\overline{\nabla}$. It acts on $T^{\ast
}Q$-valued vector fields in $N$ (note that along the map $j^1s$,
$\partial L/\partial j^{1}s$ is precisely a section of $TN\otimes
s^{\ast}T^{\ast }Q\rightarrow N$) and it is defined as the only
operator such that
\begin{align*}
    \mathrm{div}^{\mathbf{v}}\left\langle \mathcal{X},X\right\rangle
=\left\langle \mathrm{div}^{\overline{\nabla},\mathbf{v}}\mathcal{X} ,X\right\rangle +\left\langle
\mathcal{X},\overline{\nabla}X\right\rangle
\end{align*}
for any vector field $\mathcal X\in TN\otimes T^*Q$ and any
section vector field $X$ in $TQ$.

We now assume that the Lagrangian $L:J^{1}(N,Q)=T^{\ast}N\otimes
TQ\rightarrow\mathbb{R}$ can be decomposed as $L=L^{h}+L^{v}$ with
\begin{align*}
        L^{h}(\omega\otimes v)=L^{h}(\omega\otimes p^{h}(v))\qquad\mathrm{and}\qquad  L^{v}
    (\omega\otimes v)=L^{v}(\omega\otimes p^{v}(v))
\end{align*}
for any $\omega\otimes v \in T^{\ast}N \otimes TQ$, with respect
to the connection $\mathcal{A}$. Furthermore, as $TQ=HQ\oplus VQ$,
we have
\begin{align*}
    L^{h}:T^{\ast}N\otimes HQ\rightarrow\mathbb{R}\text{\qquad
    and\qquad} L^{v}:T^{\ast}N\otimes VQ\rightarrow\mathbb{R}.
\end{align*}
Obviously, the $G$ invariance of $L$ and $\mathcal{A}$ extends to
the $G$-invariance of $L^{v}$ and $L^{h}$ so that they drop to the
quotient as
\begin{align*}
    \ell^{h}:J^{1}(N,\Sigma)=T^{\ast}N\otimes
    T\Sigma\rightarrow\mathbb{R\qquad
    }\text{and\qquad}\ell^{v}:T^{\ast}N\otimes\mathfrak{\tilde{g}}\rightarrow \mathbb{R},
\end{align*}
to form the reduce Lagrangian $\ell (j^1 \rho, \sigma) =\ell^{h}(j^1\rho)+\ell^{v}(\rho,
\sigma)$. It is easy to see that
\begin{align*}
    \frac{\delta \ell}{\delta j^1 \rho} = \frac{\delta \ell ^h}{\delta
    j^1 \rho}\qquad\mathrm{and}\qquad \frac{\delta \ell}{\delta \sigma}=\frac{\delta
    \ell ^v}{\delta \sigma}.
\end{align*}

We then consider that the linear connection $\overline{\nabla}$ in $Q$ is
invariant under the action of $G$ so that it projects to a linear
connection $\nabla$ in $\Sigma =Q/G$ by the condition $\nabla _X Y
= \pi_*(\overline{\nabla}_{X^h} Y^h)$. In addition, the connection
$\mathcal{A}$ induces a connection in the associated bundle
$\tilde{\mathfrak{g}}\to \Sigma$. With respect to these
connections we can compute
\begin{align*}
    \frac{\nabla \ell}{d \rho}=\frac{\nabla \ell^h}{d
    \rho}+\frac{\nabla \ell ^v}{d \rho},
\end{align*}
and the Lagrange-Poincar\'e equations \eqref{ff} thus read
\begin{equation}
\left.
\begin{array}{c} \mathrm{div}^{\nabla ,
\mathbf{v}}\left(\dfrac{\delta \ell ^h}{\delta j^1 \rho}\right)
-\dfrac{\nabla \ell ^h}{\delta \rho} = f^h + \dfrac{\nabla \ell
^v}{\delta  \rho} - \left\langle \dfrac{\delta \ell ^v}{\delta
\sigma},i_{T\rho}\bar{\mathcal{B}}\right\rangle , \\
\mathrm{div}^{\mathcal{A}}\dfrac{\delta\ell
^v}{\delta\sigma}+\mathrm{ad}_{\sigma}^{\ast}\dfrac{\delta\ell
^v}{\delta\sigma}=f^{v}.
\end{array}
\right\} \label{ff2}
\end{equation}
The Lagrangian splitting is crucial in this methods and allows the
appearance of the standard Euler-Lagrange equations for $\ell^h$
in the left hand side of the first equation. The second important
ingredient is the force term $f^h$ which will allow us to exactly
obtain the Euler-Lagrange equations by cancelling the right hand
side of the same equation.

\subsection{The un-reduction theorem}

We are now ready to state the central theorem of the un-reduction
method using the field theoretical context described above.
\begin{theorem}
\label{mainth} Let $N$ be a smooth manifold oriented by a volume
form $\mathbf{v}$ and $\pi : Q \to \Sigma$ be a $G$-principal
bundle equipped with a principal connection $\mathcal{A}$. Let
$l:J^1(N,\Sigma)=T^*N\otimes T\Sigma \to \mathbb R$ be a first
order Lagrangian. We consider a $G$-invariant Lagrangian
$L:J^1(N,Q)=T^*N\otimes TQ\to \mathbb{R}$ such that $L=L^h + L^v$
where $L^h\circ p^h = L^h$ is uniquely determined by $l$, $L^v
\circ p^v = L^v$ is arbitrary, and $p^h,p^v$ are the projectors of
the splitting $TQ=HQ\oplus VQ$ induced by $\mathcal{A}$. We also
consider a $G$-equivariant force $F:J^1(N,Q)\to T^*Q$ such that
$F^v=p^v \circ F$ is arbitrary and $F^h = p^h \circ F$ is given by
the condition
\begin{equation}
\label{fh}
f^h = -\frac{\nabla \ell ^v}{\delta \rho} + \left\langle
\frac{\delta \ell ^v}{\delta
\sigma},i_{T\rho}\bar{\mathcal{B}}\right\rangle ,
\end{equation}
for its projection $f^h:J^1(N,\Sigma)\times
(T^*N\otimes\mathfrak{\tilde{g}})\to T^*\Sigma$. Then, the
variational equations of the problem defined by $L$ and $F$ read
\begin{equation}
\label{eqsp} \left.
\begin{array}{c} \mathcal{EL}(L^h)(j^2 s) =0\\
\mathcal{A}^* \mathrm{div}^\mathbf{v}\left(\mathbf J\left(\dfrac{\delta
L^v}{\delta j^1 s}\right)\right)  =  F^v (j^1 s),
\end{array}
\right\}
\end{equation}
where $\mathcal{A}^* : \mathfrak{g}^*\to V^*Q$ is the dual of the
connection form. Finally, critical solutions $s:N\to Q$ of
\eqref{eqsp} project to critical solutions $\rho = [s]_G$ of the
Euler-Lagrange equations $\mathcal{EL}(l)(j^2 \rho)=0$.
\end{theorem}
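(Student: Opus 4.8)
The plan is to derive the stated equations \eqref{eqsp} directly from the Lagrange--Poincar\'e equations \eqref{ff2} of Theorem \ref{LP}, exploiting the special choice of force \eqref{fh}, and then to recognise the resulting system as the un-reduced form of the Euler--Lagrange equations of $l$. First I would apply Theorem \ref{LP} to the $G$-invariant Lagrangian $L = L^h + L^v$ and the $G$-equivariant force $F = F^h + F^v$: the critical maps $s:N\to Q$ of the variational principle \eqref{un-red-var} are exactly those whose reduced data $(j^1\rho,\sigma)$ satisfy the pair \eqref{ff2}. Now substitute the defining condition \eqref{fh} for $f^h$ into the first equation of \eqref{ff2}: the entire right-hand side cancels, leaving
\begin{align*}
\mathrm{div}^{\nabla,\mathbf{v}}\!\left(\frac{\delta\ell^h}{\delta j^1\rho}\right) - \frac{\nabla\ell^h}{\delta\rho} = 0,
\end{align*}
which by the expression \eqref{canonicalEL} for the Euler--Lagrange form (applied to $\ell^h$, equivalently to $l$, via the projected connection $\nabla$) is precisely $\mathcal{EL}(l)(j^2\rho) = 0$. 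This already yields the last sentence of the theorem, since $\rho = [s]_G$; so the content of the middle equations is a reformulation upstairs on $Q$.

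The second step is to rewrite the two reduced equations as the two equations of \eqref{eqsp} living on $Q$. For the first: since $L^h = L^h\circ p^h$ factors through the horizontal bundle and is $G$-invariant, its Euler--Lagrange form $\mathcal{EL}(L^h)(j^2 s)$ is horizontal and $G$-equivariant, hence determined by its drop to $\Sigma$, which is $\mathcal{EL}(\ell^h)(j^2\rho)=\mathcal{EL}(l)(j^2\rho)$ under the identification $H^*Q/G \simeq T^*\Sigma$; the connection $\overline\nabla$ on $Q$ is chosen $G$-invariant and projects to $\nabla$, so this drop is compatible with the computation in the previous paragraph. Thus $\mathcal{EL}(L^h)(j^2 s) = 0$ is equivalent to the (force-cancelled) first Lagrange--Poincar\'e equation. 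For the second equation of \eqref{ff2}, I would unwind the reduced vertical equation $\mathrm{div}^{\mathcal{A}}\frac{\delta\ell^v}{\delta\sigma} + \mathrm{ad}^*_\sigma\frac{\delta\ell^v}{\delta\sigma} = f^v$ using the identification $VQ/G\simeq\widetilde{\mathfrak g}$ and the momentum map $\mathbf J$ of \eqref{momapp}: the vertical part of the canonical Euler--Lagrange form \eqref{canonicalEL} of $L^v$, when paired against infinitesimal generators, reproduces $\mathbf J(\delta L^v/\delta j^1 s)$, and the covariant divergence term $\mathrm{div}^{\mathcal A}$ together with the $\mathrm{ad}^*_\sigma$ term assembles precisely into the combination $\mathrm{div}^{\mathbf v}(\mathbf J(\delta L^v/\delta j^1 s))$ pushed into $V^*Q$ by the dual connection map $\mathcal A^*$. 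Equating with $F^v$ then gives the second line of \eqref{eqsp}.

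The main obstacle I anticipate is precisely this last identification: showing that the reduced vertical Lagrange--Poincar\'e operator $\mathrm{div}^{\mathcal A}(\cdot) + \mathrm{ad}^*_\sigma(\cdot)$ applied to $\delta\ell^v/\delta\sigma$ is, after un-reduction, the intrinsic object $\mathcal A^*\,\mathrm{div}^{\mathbf v}(\mathbf J(\delta L^v/\delta j^1 s))$. This requires carefully tracking how the covariant divergence defined by $\mathcal A$ on sections of $T^*N\otimes\widetilde{\mathfrak g}$ corresponds, upstairs, to the ordinary divergence of the $\mathfrak g^*$-valued current composed with the connection, and checking that the Lie-algebraic $\mathrm{ad}^*_\sigma$ term is exactly the correction produced by differentiating the equivariance (i.e. the non-flatness of $\mathcal A$ contributes through $\sigma = \mathcal A(Ts)$). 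The cleanest route is to pair both sides with an arbitrary section $\eta$ of $\widetilde{\mathfrak g}$, use item (3) of Theorem \ref{LP} with the constrained variation $\delta\sigma = \nabla^{\mathcal A}\eta - [\sigma,\eta] + \bar{\mathcal B}(\delta\rho,T\rho)$ but with $\delta\rho = 0$, integrate by parts via the defining property of $\mathrm{div}^{\overline\nabla,\mathbf v}$, and match term by term with the variation $\widehat{\xi_Q}$ induced upstairs by $\eta$; the horizontal force $f^h$ drops out of this computation entirely, and the remaining routine bookkeeping of the connection terms yields the claim. The equivalence of the two variational problems (items (1) and (3) of Theorem \ref{LP}) guarantees that no solutions are gained or lost in passing between \eqref{eqsp} and \eqref{ff2}, completing the proof.
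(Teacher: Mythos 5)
Your argument is correct and reaches the same conclusion, but it travels in the opposite direction from the paper's proof. You descend to the reduced Lagrange--Poincar\'e equations \eqref{ff2}, observe that the choice \eqref{fh} makes the right-hand side of the horizontal equation vanish identically, and then un-reduce both equations back to $Q$; the paper instead never writes the reduced equations at all, but manipulates the unreduced variational principle directly: it substitutes the reduced expressions for $\delta\int L^v\mathbf{v}$ and $\int\langle F^h,\delta s\rangle\mathbf{v}$, at which point the terms $\frac{\nabla\ell^v}{\delta\rho}$ and the curvature pairing cancel against $f^h$, and the surviving vertical contribution $\int\langle\frac{\delta\ell^v}{\delta\sigma},\nabla^{\mathcal A}\eta-[\sigma,\eta]\rangle\mathbf{v}$ with $\eta=(s,\mathcal A(\delta s))_G$ collapses to $\int\langle\frac{\delta\ell^v}{\delta\sigma},(s,d\mathcal A(\delta s))_G\rangle\mathbf{v}$ and is integrated by parts into $-\int\langle\mathrm{div}^{\mathbf v}\mathbf J(\delta L^v/\delta j^1s),\mathcal A(\delta s)\rangle\mathbf{v}$, so the principle splits by inspection into the horizontal and vertical lines of \eqref{eqsp}. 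The step you flag as the main obstacle --- identifying the reduced vertical operator $\mathrm{div}^{\mathcal A}+\mathrm{ad}^*_\sigma$ with $\mathcal A^*\,\mathrm{div}^{\mathbf v}\mathbf J$ --- is exactly this computation, and your proposed resolution (pair with $\eta$, take $\delta\rho=0$, use the constrained variation formula and integrate by parts) is the paper's calculation read in reverse; the cancellation of $[\sigma,\eta]$ against the bracket term in $\nabla^{\mathcal A}\eta$ is the mechanism in both. What your route buys is that the final projection statement becomes immediate (the cancelled first Lagrange--Poincar\'e equation \emph{is} $\mathcal{EL}(l)(j^2\rho)=0$), whereas the paper defers it to a one-line appeal to Theorem \ref{LP}; what the paper's route buys is that one never has to justify separately that $\mathcal{EL}(L^h)(j^2s)=0$ is the faithful un-reduction of $\mathcal{EL}(\ell^h)(j^2\rho)=0$ --- a point you handle with a brief equivariance argument that is fine but worth making explicit, since it uses both the $G$-invariance of $\overline\nabla$ and the identification $H^*Q/G\simeq T^*\Sigma$.
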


\begin{proof}
We follow the notations of the preceding sections. The variational
principle of $L$ and $F$ is
\begin{eqnarray*}
0 & = & \delta \int _N L^h\mathbf{v} + \delta \int _N
L^v\mathbf{v} + \int _N \langle F^h ,\delta s\rangle
\mathbf{v} + \int _N \langle F^v ,\delta s\rangle \mathbf{v}\\
&=& \delta \int _N L^h  \mathbf{v} + \int _N \left\langle
\frac{\delta \ell ^v}{\delta \sigma},\delta
\sigma\right\rangle\mathbf{v} + \int _N \left\langle \frac{\nabla
\ell ^v}{\delta \rho},\delta \rho\right\rangle \mathbf{v}\\
&\qquad & + \int _N \left\langle f^h , \delta \rho \right\rangle
\mathbf{v} + \int _N \langle F^v ,\mathcal{A}( \delta s)\rangle
\mathbf{v}\\
&=& \delta \int _N L^h (j^1 s) \mathbf{v} + \int _N \left\langle
\frac{\delta \ell ^v}{\delta \sigma}, \delta \sigma \right\rangle
\mathbf{v} + \int _N \left\langle \frac{\delta \ell ^v}{\delta
\sigma}, \bar{\mathcal{B}}(T\rho , \delta \rho )\right\rangle
\mathbf{v}\\
& \qquad & +\int _N \langle F^v , \mathcal{A}(\delta s)\rangle
\mathbf{v}.
\end{eqnarray*}
From the expression of $\delta \sigma$ in Theorem \ref{LP} with
$\eta (x) = (s(x),\mathcal{A}(\delta s))_G$ we have that
\[
\int _N \left\langle \frac{\delta \ell ^v }{\delta \sigma},\delta
\sigma + \bar{\mathcal{B}}(T\rho , \delta \rho)\right\rangle
\mathbf{v}= \int _N \left\langle \frac{\delta \ell ^v }{\delta
\sigma}, \nabla ^\mathcal{A}\eta -
[\sigma,\eta]\right\rangle\mathbf{v}.
\]
For any $f: N\to \mathfrak{g}$, we recall that the covariant
derivative is $\nabla ^\mathcal{A}(s(x),f(x))_G=(s(x),df(x) +
[\mathcal{A}(j^1s),f])_G=(s(x),df(x))_G +[\sigma, (s(x),f(x))_G]$.
Now, for $f=\mathcal{A}(\delta s)$, we have
\begin{eqnarray*} \int _N \left\langle \frac{\delta
\ell ^v }{\delta \sigma},\delta \sigma + \bar{\mathcal{B}}(T\rho ,
\delta \rho)\right\rangle \mathbf{v}  =  \int _N \left\langle
\frac{\delta \ell ^v }{\delta \sigma}, (s,d\mathcal{A}(\delta
s))_G\right\rangle\mathbf{v} \\
=  \int _N \left\langle \mathbf{J}\left(\frac{\delta L^v}{\delta
j^1 s}\right),d\mathcal{A}(\delta s)\right\rangle \mathbf{v} =
-\int _N \left\langle \mathrm{div}^{\mathbf{v}}\left(\mathbf{J}
\left(\frac{\delta L^v}{\delta j^1
s}\right)\right),\mathcal{A}(\delta s)\right\rangle \mathbf{v}.
\end{eqnarray*}
Finally note that, as $L^h(j^1 s)=l (j^1 \rho)$, the variation of
the action defined by $L^h$ with respect to vertical variations of
$L^h$ automatically vanishes. The variational principle naturally
splits into vertical and horizontal part as equations
(\ref{eqsp}).

Solutions of the variational problem defined by $M$ project to
solutions of the problem defined by $l=\ell ^h$ by Theorem
\ref{LP}.
\end{proof}

\begin{remark}
    If we have $N=\mathbb R$, $\mathbf{v}=dt$, (that is, the case of classical Mechanics)
    we have $\mathrm{div}^\mathbf{v}=d/dt$ and we recover the
    results and equations of \cite{BEBH}.
\end{remark}

The expression of the horizontal force $F^h$ defined by condition
(\ref{fh}) is
\[
F^h=-\frac{\overline{\nabla} L^v}{ds}+\left\langle \mathbf
J\left(\frac{\partial L^v}{\partial j^1s}\right),
i_{Ts}\mathcal{B}\right\rangle .
\]
The variational principle on the un-reduced space of equation
\eqref{un-red-var} is then defined using this particular force
such that the reduced Lagrange-Poincar\'e equations decouples.

The first equation in \eqref{eqsp} is the usual Euler-Lagrange
equation for the horizontal Lagrangian. With respect to the
second, we first note that we cannot exchange the position of
$\mathcal{A}$ and $\mathrm{div}^{\mathbf{v}}$ as the authors do in
\cite{BEBH}. In fact, the divergence of
$\mathcal{A}^*\mathbf{J}(\delta L^v/\delta j^1s)$ would require an
additional (linear) connection in $Q$. Moreover, as we mentioned
in the definition \eqref{momapp} of $\mathbf{J}$ above, we have
that $\mathbf{J}(\delta L^v/\delta j^1s)$ is a covariant momentum
map, so that $\mathrm{div}^{\mathbf{v}}\mathbf{J}(\delta
L^v/\delta j^1s)$ is the expression of a conservation law with
respect to the group of symmetries. The second equation in
\eqref{eqsp} equals this to the vertical part of the force. If one
set $F^v=0$, the conservation law is complete, but sometimes it is
interesting to keep this vertical force as it might be used to
externally control the dynamic along the vertical space.

\subsection{Reconstruction and the surjectivity of the
un-reduction scheme}

The theorem \ref{mainth} above says that solutions of the un-reduced
problem project to solutions of the Euler-Lagrange equations
defined by the Lagrangian $l$. One may ask if this projection is
exhaustive, that is, if every solution of the variational
equations of $l$ is a projection of a solution of $L$. This
question involves some topological constraints concerning $N$ (see
\S \ref{topo}), but we first give an answer assuming that $N$ is
simply connected (or we just consider the question from a local
point of view). From the Lagrange-Poincar\'e reduction theorem,
the variational equations defined by $L$ are equivalent to
\[
\mathcal{EL}(\ell ^h)(j^2\rho)=0,\qquad \mathrm{div} ^\mathcal{A}
\frac{\delta \ell ^v}{\delta \sigma} + \mathrm{ad}^*_\sigma
\frac{\delta \ell ^v}{\delta \sigma} = f^v,
\]
that is, they contain the Euler-Lagrange equations of $l=\ell ^h$
together with an additional set of equations which might restrict
the solution of the first set. They are written in terms of the
map $\sigma : N \to T^*N\otimes \mathfrak{\tilde{g}}$ and $\rho :
N \to \Sigma$. Recall that $\sigma$ determines $\rho$ as $\rho =
\pi _{\tilde{\mathfrak{g}}}\circ \sigma$ (see diagram
(\ref{maps})). The key point is that the first reduced equation
only involves $\rho$ and its first jet $j^1\rho$. Once we have a
solution $\rho$ and $j^1\rho$, we may consider both the second
reduced equation and the compatibility condition. They are now
equations for maps $\sigma$ seen as sections of the bundle
$T^*N\otimes \rho ^* \mathfrak{\tilde{g}}\to N$, which means that
we ``restrict'' the vertical part of our construction to the
fibers which sits only on the solution $\rho$ on the base
manifold.  With the solution of these last equations, we can
perform reconstruction to get a map $s:N\to Q$ such that $\rho =
[s]_G$. Roughly speaking, the reduced equations are uncoupled, so
that $\rho$ and $\sigma$ can be treated separately and the
surjectivity of the un-reduction technique is guaranteed. The
reason of this is the force term which exactly decouples these
equations although it is not explicit in the un-reduced equations.

\subsection{Topological constraints and un-reduction}\label{topo}

The topology of the manifold $N$ may create interesting situations
in the reconstruction and un-reduction frameworks. If $N$ is not
simply connected, the flatness of a connection, that is the
compatibility condition \eqref{comp}, does not ensure the
existence of global integral sections and the surjectivity of the
projection $s \mapsto \rho$ of solutions involves some other
global considerations.

An example of this situation is the following.
Consider $Q=S^3$ and $G=S^1$ so that $Q\to \Sigma = S^2$
is the Hopf fibration. Choose the mechanical connection
$\mathcal{A}$ in this bundle, that is, the connection such that
$H_qS^3 \perp V_q S^3$ with respect to the standard Riemannian
metric in $S^3$. For the sake of simplicity we consider $N= S^1$,
that is, a problem of Mechanics with cyclic solutions where, in
addition, the compatibility condition \eqref{comp} is satisfied
automatically. We denote $\theta$ the coordinate of $S^1$ and we
consider the $G$-invariant Lagrangian $L:J^1(N,S^3)\to\mathbb{R}$,
\[
L(j^1_\theta s)=\frac12\left\Vert \dot{s}(\theta) \right\Vert ^2,
\]
where $\dot{s} = d s /d \theta \in T _{s(\theta)}S^3$, as well as
its decomposition $L=L^h+L^v$ induced by the orthogonal splitting
$\dot{s}(\theta) = \dot{s}^h(\theta) + \dot{s}^v(\theta)$ defined
by the $\mathcal{A}$. The adjoint bundle $\tilde{\mathfrak{g}}\to
S^2$ is a trivial line bundle and the reduced phase space
$J^1(N,\Sigma)\times (T^*N\otimes \mathfrak{\tilde{g}})$ becomes
$TS^2 \times T^*S^1$. We can write the reduced Lagrangian as $\ell
= \ell ^h + \ell ^v$ with
\[
\ell ^h( j^1 \rho) = \frac12\left\Vert \dot{\rho} \right\Vert ^2 ,\qquad
\ell ^v (\sigma )= \frac12 \varsigma^2,
\]
where $\rho : S^1\to \Sigma=S^2$, $\dot{\rho} = d \rho /d \theta$,
and $\sigma = \varsigma d\theta$ with $\varsigma$ a map $S^1\to
\tilde{\mathfrak g}\cong \mathbb R$. The reduced equations are
\[
\nabla \dot{\rho} =0,\qquad \dot{\varsigma} = f^v.
\]
Solutions of the first equation are closed geodesics $\rho$ in
$S^2$. Given one of these, the curves $s(\theta)$ of the
un-reduced problem will be in the restriction $\rho ^* S^3$ of the
Hopf fibration along $\rho$. This restriction is a torus and
according to the reconstruction process seen in \S \ref{CLP}, the
curve $s(\theta)$ must be horizontal with respect to the
connection $\mathcal{A}+\varsigma d\theta$. Under these
circumstances, the curve $s(\theta)$ need not be closed and in
fact, the phase $\varphi \in S^1$ such that $s(2\pi )-s(0)=\varphi
$ is precisely the holonomy of the connection
$\mathcal{A}+\varsigma d\theta$ along the curve $\rho$. The
holonomy of $\mathcal{A}$ alone is $\pi$ (indeed, the connection
$\mathcal{A}$ is not flat and the holonomy is related with the
Chern number of the Hopf bundle, see \cite[Chapter XII]{KN}).
Hence, besides conditions $\dot{\varsigma} =f^v$ and $\varsigma
(2\pi)= \varsigma(0)$, for the closeness of $c(\theta)$ we need
$\varsigma (\theta)$ to satisfy
\[
\int _0 ^{2\pi} \varsigma (\theta)d\theta =-\pi ,
\]
so that we cancel the holonomy of $\mathcal{A}$. Only very
specific functions $f^v$ may accomplish these conditions. For
example, $f^v(\theta)=\mathrm{cos}(\theta)$ gives $\varsigma
(\theta) =\mathrm{sin}(\theta) -1/2$ as a possible solution. Other
functions $f^v$ does not provide closed curves $c(\theta)$.
Furthermore, it is important to note that the constant value of
the holonomy of the fixed connection $\mathcal{A}$ along geodesics
$\rho$ is unusual and other choices of fixed connections
$\mathcal{A}$ will define a holonomy depending on
 $\rho$. In that case, the choice of $f^v$ will depend on the global curve
$\rho$ and will not be a local object.

In other words, there are circumstances where one cannot recover
all solution of the reduced problem from those of the un-reduced
problem. It seems that the freedom in the choice of $L^v$ and,
especially, $F^v$ might solve this issue but their specific
expression will depend on the solution $\rho$ itself. We refer the
reader to \cite{MaMoRa} and \cite{Mont} to some related approaches
to the problem or, for example, \cite{Naka} for a similar
situation to the example above in the context of isoholonomic
problems and quantum computation. The situation for manifolds $N$
of dimension greater than $1$ is, of course, much more
complicated.

\section{Applications}\label{apps-sec}

\subsection{Planar curve matching}
We begin the application section with curve matching, he main motivation of this work, initiated by \cite{BEBH} and extended here to field theories. 

\subsubsection{Geometric setting}

Let $Q= \mathrm{Emb}^+(S^1,\mathbb R^2)$ be the manifold of
positive oriented embeddings from $S^1$ to $\mathbb R^2$. Elements
in $Q$ are maps $c(\theta)\in \mathbb R^2$ for $\theta\in S^1$ and
elements in the tangent space $T_cQ$ are pairs $(c,v)$ with $c\in
\mathrm{Emb}^+(S^1,\mathbb R^2)$ and $u \in C^\infty ( S^1 ,
\mathbb R^2)$ a parametrized vector field along the curve $c$.
Then
\begin{align*}
	TQ = Q \times C^\infty (S^1 , \mathbb{R}^2)
\end{align*}
and we can take a trivial linear connection $\overline{\nabla}$ in
$Q$. We consider an open domain $N\subset\mathbb R\times \mathbb
R$ with the Euclidean metric, coordinates $(t,x)$ and volume form
$\mathbf v=dt\wedge dx$. Elements of the jet bundle
$J^1(N,Q)\simeq T^*N\otimes TQ$ are written as
\begin{align}
    j^1_{(x,t)}c = c_t(\theta)(t,x) dt + c_x(\theta)(t,x)dx,
\end{align}
that is, $c_t$ and $c_x$ are the derivatives of a map $c:N\to Q$
along $t$ and $x$ respectively.

We now consider the group $G= \mathrm{Diff}^+(S^1)$ of orientation
preserving diffeomorphisms of $S^1$ and its Lie algebra $\mathfrak
g= \mathfrak X(S^1)$ which consists of vector fields on $S^1$. The group $G$ acts on the right in
$\mathrm{Emb}^+(S^1,\mathbb R^2)$ as reparametrization of curves
$c$ and the reduced space is the space of shapes in $\mathbb R^2$
\begin{align}
    \Sigma := \frac{Q}{G} = \frac{\mathrm{Emb}^+(S^1,\mathbb
    R^2)}{\mathrm{Diff}^+(S^1)}.
\end{align}
The principal bundle $Q\to \Sigma$ is endowed with a canonical
principal connection $\mathcal{A}$ as follows. Given $u\in T_c Q$,
we consider its tangent and normal decomposition
\[
u(\theta)=v(\theta)\mathbf{t} (\theta) + h(\theta)
\mathbf{n}(\theta ),
\]
where $(\mathbf{t} , \mathbf{n})$ is the orthonormal Frenet frame
along $c$ and $v(\theta), h(\theta)$ scalar functions along the curve.
We clearly have that $v(\theta){\bf t}(\theta)$ is a
vector tangent to the orbits of $G=\mathrm{Diff}^+(S^1)$ so that
$v(\theta){\bf t} (\theta)\in V_cQ$. We can thus define the
horizontal part of $u$ as the part $h(\theta) {\bf n}(\theta )$
and we have a decomposition $TQ=HQ\oplus VQ$.

The definition of a convenient Riemannian metric in
$Q=\mathrm{Emb}^+(S^1,\mathbb R^2)$ invariant with respect to the
action of $G=\mathrm{Diff}^+(S^1)$ is an interesting topic which
has attired the attention of many research works (see, for
example, \cite{BBMM} and \cite{BBM} and the references therein).
The natural $L^2$ metric
\begin{align}
    g(u,v)=\int _{S^1} \langle u(\theta) , v(\theta )\rangle dl,
\end{align}
with $u,v \in T_c Q$, and $dl = | c_\theta|d\theta$ being the
arc-length, is not very useful as it defines a zero geodesic
distance in both $Q$ and $Q/G$. The problem can be overcome in the
shape space $Q/G$ by the metrics
\begin{align}
    g(u,v)=\int _{S^1} (1+A\kappa (\theta)^2) \langle u(\theta) ,v(\theta )\rangle dl,
    \label{weighted-metric}
\end{align}
with $A>0$ and $\kappa$ the Frenet curvature of $c$. But this
metric defines again a zero geodesic distance in $Q$ along
the fibers of the fibration $Q\to Q/G$. A metric with a well
defined Riemannian distance in both $Q$ and $Q/G$ is obtained by
adding higher order derivatives of $u$ and $v$ in a Sobolev-type
expression as
\begin{align}
g(u,v)=\int _{S^1} \left(\langle u(\theta) , v(\theta ) \rangle + A^2\langle D_\theta u(\theta) , D_\theta v(\theta )\rangle \right) dl,
\label{sobolev-metric}
\end{align}
where $D_\theta =
\frac{1}{|c_\theta|}\partial_\theta$ is the arc-length
derivative. We can collect these three cases (as well as many
others, see \cite{BBMM}) as
\begin{align}
    g_{\mathcal P}(u,v) = \int_{S^1} \langle u(\theta),\mathcal P v(\theta)
    \rangle ,
    \label{metric-general}
\end{align}
for a convenient choice of a $G$-invariant self-adjoint
pseudo-differential operator $\mathcal P$ which can depend on the
curve and its derivatives. In particular, the operator for
\eqref{weighted-metric} if $\mathcal{P}=1+A\kappa ^2$, and for
\eqref{sobolev-metric} we have $\mathcal P = 1-A^2D_\theta^2$. One
additional advantage of the operator associated to
\eqref{sobolev-metric} is that it does not depend on the curve,
whereas the operator for \eqref{weighted-metric} depends on the
curvature of the curve where it is evaluated. This represents a
great simplification in the expression of the un-reduced
equations.

\begin{remark}
Even if the mechanical connection $\mathcal A$ in this context of
space of embeddings is easy to visualise and compute, its
structure is far from being trivial. The calculation of its
holonomy and curvature is a whole subject on its own, which should 
be addressed in forthcoming works.
\end{remark}

\subsubsection{Reduction and un-reduction}
Elements of the shape space of plane curves $\Sigma =
\mathrm{Emb}^+(S^1,\mathbb{R}^2)/\mathrm{Diff}^+(S^1)$ are denoted
by $\rho$ and the elements of the jet space
$J^1(N,\Sigma)=T^*N\otimes T\Sigma$ are expressed as
\begin{align*}
    j^1_{(t,x)} \rho = \rho _t (t,x)  dt + \rho _x (t,x) dx.
\end{align*}
Furthermore, elements of $T^*N\otimes \mathfrak{\tilde{g}}$ are
\begin{align*}
    \sigma(t,x) = \sigma _t(t,x) dt + \sigma _x(t,x) dx
\end{align*}
where $\sigma_t (t,x),\sigma_x (t,x)$ belong to the adjoint bundle
$ \tilde{\mathfrak g}\to \Sigma$ and can be understood as vector
fields along a shape $\rho \in \Sigma$ and tangent to it. We
consider the $\mathrm{Diff}^+(S^1)$-invariant Lagrangian
$L:J^1(N,Q)\simeq T^*N\otimes TQ \to \mathbb{R}$
\begin{align}
L(j^1_{(x,t)}c)= \frac12 \int_{S^1}\left (\left \langle c_t , \mathcal Pc_t \right \rangle + \left \langle c_x , \mathcal P c_x \right \rangle\right ) dl
\label{curve-lagrangian}
\end{align}
which can be decomposed as $L=L^h + L^v$ with respect to the connection $\mathcal{A}$ as
\begin{align*}
L^h(j^1_{(x,t)}c) &= \frac12 \int_{S^1}\left (\left \langle h_t , \mathcal Ph_t \right \rangle +  \left \langle h_x , \mathcal P h_x \right \rangle\right ) dl,\\
L^v (j^1_{(x,t)}c) &= \frac12 \int_{S^1}\left (\left \langle v_t ,
\mathcal Pv_t \right \rangle +  \left \langle v_x , \mathcal P v_x
\right \rangle\right ) dl,
\end{align*}
where
\begin{equation*}
        c_t = v_t\mathbf{t} + h_t \mathbf{n} \quad \mathrm{and}\quad c_x= v_x\mathbf{t} + h_x\mathbf{n}.
\end{equation*}

The un-reduction equations \eqref{eqsp} are then computed in the
proposition \ref{ppop} below in the case when $\mathcal P$ is
independent of the curve.
\begin{proposition}\label{ppop}
        The un-reduced equations \eqref{eqsp} for the bi-dimensional problem of planar
simple curves defined by the Lagrangian \eqref{curve-lagrangian}
and the metric \eqref{metric-general} with $\mathcal P$
independent of the curve are
\begin{align}
        \begin{split}
    \partial _{x}\mathcal Ph_{x} + \partial _{t}\mathcal Ph_{t}&= D_\theta(h_{x}\mathcal P v_{x}+h_{t}\mathcal P v_{t})-\kappa H \\
    \partial_x \mathcal P v_x+\partial_t\mathcal P v_t&=F^v
        \end{split}
        \label{un-red-curve}
\end{align}
with the decomposition
\begin{align*}
        c_x =  v_x {\bf t}+ h_x {\bf n},\qquad\qquad c_t =  v_t {\bf t}+ h_t {\bf n}
\end{align*}
and for any choice of vertical force $F^v$, where
\begin{align}
    H=\frac12(h_x\mathcal Ph_x+h_t\mathcal P h_t).
\end{align}

\end{proposition}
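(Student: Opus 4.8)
The plan is to apply Theorem \ref{mainth} directly, with $Q=\mathrm{Emb}^+(S^1,\mathbb R^2)$, $G=\mathrm{Diff}^+(S^1)$, the trivial linear connection $\overline\nabla$, and the mechanical connection $\mathcal A$ given by the tangent–normal splitting. Since $L=L^h+L^v$ is already the $\mathcal A$-induced horizontal/vertical decomposition of \eqref{curve-lagrangian}, equations \eqref{eqsp} specialise to: (i) $\mathcal{EL}(L^h)(j^2 c)=0$ and (ii) $\mathcal A^*\,\mathrm{div}^{\mathbf v}\big(\mathbf J(\delta L^v/\delta j^1 c)\big)=F^v$. The first task is therefore to unwind (i) into the first line of \eqref{un-red-curve}, and the second into the second line, using $\mathbf v=dt\wedge dx$, so that $\mathrm{div}^{\mathbf v}$ acts as $\partial_t+\partial_x$ on the corresponding components.

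For the vertical equation, I would note that $\delta L^v/\delta j^1 c$ is, componentwise in $T^*N$, the one-form pair $(\mathcal P v_t)\mathbf t\,dt+(\mathcal P v_x)\mathbf t\,dx$ paired against variations through $V_cQ=\{w\mathbf t\}$; the momentum map $\mathbf J$ onto $\mathfrak g^*=\mathfrak X(S^1)^*$ records exactly the tangential density $\mathcal P v_t$ and $\mathcal P v_x$ (modulo the identification of $\mathfrak X(S^1)^*$ with densities, using arc length). Taking $\mathrm{div}^{\mathbf v}$ then produces $\partial_t(\mathcal P v_t)+\partial_x(\mathcal P v_x)$, and since $\mathcal A^*$ is injective on the relevant image, equating with $F^v$ gives the second equation. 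Because $\mathcal P$ is assumed independent of the curve, $\mathcal P$ commutes past $\partial_t,\partial_x$ and no extra curvature-of-$\mathcal P$ terms appear — this is where that hypothesis is used.

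For the horizontal equation, $\mathcal{EL}(L^h)(j^2 c)=0$ is computed from \eqref{canonicalEL} with the trivial $\overline\nabla$: the $\partial L^h/\partial j^1 c$ term yields $\partial_t(\mathcal P h_t)\mathbf n+\partial_x(\mathcal P h_x)\mathbf n$ after differentiating $L^h=\tfrac12\int(\langle h_t,\mathcal P h_t\rangle+\langle h_x,\mathcal P h_x\rangle)\,dl$, but one must be careful: $h_t,h_x$ are the \emph{normal} components, i.e. $h_\bullet=\langle c_\bullet,\mathbf n\rangle$, and the Frenet frame $(\mathbf t,\mathbf n)$ itself depends on $c$ and hence on $(t,x)$. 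Differentiating the frame is what generates the extra terms on the right-hand side: the variation $\delta c$ splits as $\delta c=(\delta c)^v\mathbf t+(\delta c)^h\mathbf n$, and the variation of $\mathbf n$ in the normal direction of $\delta c$ contributes a tangential shear, producing the $D_\theta(h_x\mathcal P v_x+h_t\mathcal P v_t)$ term, while the variation of $\mathbf n$ along the curve (governed by the Frenet equation $D_\theta\mathbf n=-\kappa\mathbf t$) together with $H=\tfrac12(h_x\mathcal P h_x+h_t\mathcal P h_t)$ produces the $-\kappa H$ term. Concretely I would compute $\delta\!\int_N L^h\,\mathbf v$ directly, integrate by parts in $(t,x)$ and in $\theta$ (using $D_\theta$ and $dl=|c_\theta|d\theta$), collect the coefficient of the normal part of $\delta c$, and read off the first equation of \eqref{un-red-curve}; the tangential part of that variation vanishes identically because $L^h(j^1 c)=l(j^1\rho)$ depends only on the shape, exactly as in the last paragraph of the proof of Theorem \ref{mainth}.

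The main obstacle is the bookkeeping of the Frenet-frame variations: one must track how $\mathbf t$, $\mathbf n$, $\kappa$, and $dl$ vary both under the variation $\delta c$ and under the independent-variable derivatives $\partial_t,\partial_x$, and assemble these into precisely the curvature term $-\kappa H$ and the $D_\theta$-flux term without spurious contributions. A clean way to organise this is to use the standard formulas $\partial_\bullet\mathbf t=(D_\theta h_\bullet+\kappa v_\bullet)\mathbf n+\dots$, $\partial_\bullet\mathbf n=-(D_\theta h_\bullet+\kappa v_\bullet)\mathbf t$, and $\partial_\bullet(dl)=(D_\theta v_\bullet-\kappa h_\bullet)\,dl$ (for $\bullet\in\{t,x\}$), substitute into $\delta L^h$, and verify that everything except the normal Euler–Lagrange expression either integrates to zero or lands in the claimed right-hand side. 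Once the horizontal equation is in hand, the vertical one and the final projection statement follow immediately from Theorem \ref{mainth}.
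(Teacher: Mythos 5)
Your proposal is correct and follows essentially the same route as the paper: the horizontal equation is obtained by varying $L^h$ along purely normal variations $\delta c=\xi\,\mathbf n$ (justified exactly as you say, since $L^h$ only sees the shape), tracking the induced variation of the Frenet frame and of $dl$ to produce the $D_\theta(h_\bullet\mathcal P v_\bullet)$ flux and the $-\kappa H$ term, while the vertical equation is the momentum-map conservation law of Theorem \ref{mainth}. The only difference is bookkeeping: the paper encodes the frame dependence via $\mathbf n=Jc_\theta/|c_\theta|$ and the identities $J\mathbf n=-\mathbf t$, $D_\theta\mathbf n=-\kappa\mathbf t$ rather than your deformation formulas for $\mathbf t,\mathbf n,dl$, which is an equivalent organization of the same computation.
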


\begin{proof}
The Euler-Lagrange equation contains two terms, the first is
readily
    \begin{align*}
        \mathrm{div} \frac{\delta L^h}{\delta j^1c} = \partial_t(\mathcal Ph_t) + \partial_x(\mathcal Ph_x).
    \end{align*}
Before computing the second term of the EL equation, we rewrite
only the temporal part of the Lagrangian in order to simplify the
calculation as
    \begin{align*}
        L^h(c,j^1c)|_t &= \frac12 \int_{S^1} \langle (c_t\cdot \mathbf n)\mathbf n, \mathcal P(c_t\cdot \mathbf n)\mathbf n\rangle dl = \frac12 \int_{S^1} \left(c_t\cdot J\frac{c_\theta}{|c_\theta|}\right)\mathcal P\left(c_t\cdot J\frac{c_\theta}{|c_\theta|}\right)|c_\theta|d\theta.
    \end{align*}
This Lagrangian being horizontal, we just need to consider
variations of $c$ that are horizontal with respect to
$\mathcal{A}$, that is, variations of the form $\delta c= \mathbf
n \xi$, $\xi\in C^\infty (S^1)$. With the identities
$D_\theta\mathbf n = -\kappa \mathbf t$, $J\mathbf n= -\mathbf t$
and $J\mathbf t= \mathbf n$  we compute
    \begin{align*}
        \frac{\partial L^h|_t}{\partial c}\cdot (\mathbf n\xi) &= \int_{S^1} \left(c_t\cdot J(\mathbf n \xi)_\theta\right)\mathcal P\left(c_t\cdot \mathbf n\right)d\theta\\
         &= \int_{S^1} \xi_\theta\left(c_t\cdot J\mathbf n \right)\mathcal P\left(c_t\cdot \mathbf n\right)d\theta +  \int_{S^1} \xi\left(c_t\cdot JD_\theta\mathbf n \right)\mathcal P\left(c_t\cdot \mathbf n\right)dl\\
         &= \int_{S^1} \xi D_\theta\left [ \left(c_t\cdot \mathbf t\right)\mathcal P\left(c_t\cdot \mathbf n\right)\right]dl - \int_{S^1} \xi\kappa \left(c_t\cdot \mathbf n \right)\mathcal P\left(c_t\cdot \mathbf n\right)dl.
    \end{align*}
   Therefore the derivative of the Lagrangian is
   \begin{align*}
           \frac{\partial L^h|_t}{\partial c}= D_\theta(h_t\mathcal Pv_t) - \kappa h_t\mathcal P h_t.
   \end{align*}
   From the symmetry $t\Leftrightarrow x$, the Euler-Lagrange equation follows.
\end{proof}

\begin{remark}
	The term $\kappa H$ can be interpreted as a penalty term for the deformation of most curved regions of the curve. 
	The sign of this term would depend on the concavity or convexity of the curve at this point, and thus this force would try to prevent the curve to be deformed too fast in these regions.
	The equation \eqref{un-red-curve} also shows that the dynamics in $(x,t)$ is governed by the coupling between $h_t$ and $v_t$ required for the shape deformation to be independent of the reparametrisation.
\end{remark}
\begin{remark}
        The un-reduced equations with curvature weighted metric \eqref{weighted-metric} can be computed directly from the variational principle, as in \cite{BEBH}.
The equation will have the same symmetry $x\leftrightarrow t$ but with more complicated terms.
Because this metric is not very useful, the covariant un-reduced equations will not be displayed here.
\end{remark}

\subsection{Horizontal Lagrangians and $\sigma$-models}

The freedom in the choice of forces and Lagrangians in Theorem \ref{mainth} permits the trivial choice of
$L^v=0$ and $F^v=0$. From \eqref{fh}, the horizontal part $F^h$ of
the force automatically vanishes. This simple situation appears
when the un-reduced Lagrangian $L$ is just the pull-back of the
Lagrangian $\ell ^h = l:J^1\Sigma \to \mathbb{R}$ with respect to
the projection $J^1(N,Q)\to J^1 (N,\Sigma)$, $j^1s\mapsto
j^1[s]_G= j^1\rho$. A solution of the problem defined by $L$ is
any map $s:N\to Q$ such that $\rho = [s]_G$ is a solution for $l$.
This means that there is a gauge degeneracy in the sense that,
given a solution $s$ and any map $g:N\to G$, the map
$\bar{s}=s\cdot g$ is also a solution.

Even though these trivial choices for $F$ and $L^v$ are not always
convenient, there are some instances where they appear naturally.
This is the case of $\sigma$-models in homogeneous spaces (see for
example \cite{EF2}, \cite{EF3}, \cite{Guest}, \cite{Dai}). Let
$Q=G$ be a Lie group and $H$ be a closed subgroup such that the
Lie algebra decomposes as $\mathfrak{g}=\mathfrak{m}\oplus
\mathfrak{h}$ for certain vector space $\mathfrak{m}$ such that
$[\mathfrak{h},\mathfrak{m}]\subset\mathfrak{m}$ (that is, we have
a reductive decomposition). We can right translate the
decomposition $\mathfrak{m}\oplus \mathfrak{h} = \mathfrak{g}
=T_eG$ to every $T_gG, g\in G$, thus obtaining a connection
$\mathcal{A}$ for the principal bundle $G\to M$ over the
homogeneous space $\Sigma= M = G/H$. We consider the harmonic, or
$\sigma$-model, problem on maps $\rho : N \to M$ defined by the
Lagrangian
\begin{eqnarray*}
l:J^1(N,M ) & \to & \mathbb{R} \\
j^1\rho & \mapsto & \frac12\left\Vert d\rho \right\Vert ^2 ,
\end{eqnarray*}
where the norm is taken with respect to a pseudo-Riemannian metric
in $N$ and a Riemannian metric in $M$. The lift $L$ of $l$ to
$J^1(N,G)$ is
\begin{eqnarray*}
L:J^1(N,G ) & \to & \mathbb{R}\\
j^1 g & \mapsto & \frac12\left\Vert p^h(dg) \right\Vert ^2 ,
\end{eqnarray*}
where $p^h : TG\to HG$ is the horizontal projection defined by
$\mathcal{A}$ and the norm is taken with respect to the metric of
$N$ and the lift of the metric in $M$ to horizontal vectors in
$M$. Theorem \ref{mainth} can apply and solutions of the
force-free problem defined by $L$ project to the desired harmonic
maps in $M$.

In the majority of the homogeneous spaces where relevant
$\sigma$-models are defined, the group $G$ is endowed with a
bi-invariant metric. In this case, the reductive decomposition is
assumed to be $\mathfrak{m}=\mathfrak{h}^\perp$ and we have a
metric in $M$ by imposing the projection $\pi:G\to M$ to be an
isometric submersion, that is, the metric in $T_xM$ is the same as
the metric in $H_gG$ for any $g$ with $\pi(g)=x$. The group $G$
left-acts on the coset space $M$ by isometries. Hence, the
Lagrangians $l$ and $L$ are both $G$ invariant. This group of
symmetries is too big for $M$ to do reduction (in fact the orbit
space is a single point), but we can perform covariant
Euler-Poincar\'e reduction for $L$. We then get a new reduced
Lagrangian
\begin{eqnarray*}
\bar{l}:J^1(N,G)/G=T^*N\otimes\mathfrak{g} &\to &\mathbb{R}\\
\varsigma & \mapsto & \frac12 \left\Vert \varsigma _\mathfrak{m}
\right\Vert ^2
\end{eqnarray*}
where $\varsigma = \varsigma _\mathfrak{h} + \varsigma
_\mathfrak{m}$ is the splitting defined by the reductive
decomposition. It is easy to see that the Euler-Poincar\'e
equations are
\[
\mathrm{div}^\mathbf{v}\varsigma _\mathfrak{m} + [\varsigma
_\mathfrak{h},\varsigma_\mathfrak{m}]=0
\]
which, together with the suitable compatibility condition, can be
used to get solutions of $L$ that, afterwards, can be projected to
$\Sigma$. This approach is found, for example, in \cite{EF3},
\cite{Dai}, \cite{H}. The advantage of this un-reduction and reduction
procedure relies on the fact that $\mathfrak{g}$ is a simpler
space (is a vector space) than either $G$ and $M$.

The situation can be even put in a more general framework as
follows. Let $L$ be a first order Lagrangian on a Lie group $G$ as
configuration space, which is right invariant under the action of
a subgroup $H$ and left invariant under the group $G$ itself.
Suppose that we are interested in the induced variational problem
in the homogeneous space $G/H$. The un-reduction and reduction
procedure will give first a variational problem in $G$ to finally
induce a problem in the Lie algebra $\mathfrak{g}$ which, in
general, is simpler. See \cite{TiVi} for a description of a
similar situation in Mechanics (that is, $N=\mathbb{R}$).

\subsection{Hyperbolic curvature flow }
The hyperbolic curvature flow of plane curves (see for example
\cite{LeS} or \cite{Wa}) is the variational equation defined by
the Lagrangian
\begin{align*}
L&:T\mathrm{Emb}(S^{1},\mathbb{R}^{2})\rightarrow \mathbb{R},\\
L(c,c_t)&=\int_{S^1}\left( \tfrac{1}{2}\left\Vert c_{t}\right\Vert ^{2}-1\right) dl.
\end{align*}
Note that this is not a geodesic variational principle of the
$L^2$ metric (which provides null geodesic distances in both the
curve and shape spaces) but a Lagrangian involving a kinetic and a
potential term. Moreover, the Lagrangian $L$ can be easily split
into horizontal and vertical with respect to the connection
$\mathcal{A}(c_t)\mathcal{=(}c_{t}\cdot {\bf t}){\bf t}$ as
\begin{align*}
L^{h}=\int_{S^1}\left( \tfrac{1}{2}h^{2}-1\right) dl,\qquad
L^{v}=\int_{S^1}\tfrac{1}{2}v^{2}dl,
\end{align*}
where
\begin{align*}
c_t = h {\bf n} + v {\bf t}.
\end{align*}
The Lagrangian $L$ (and $L^h$, $L^v$) is
$\mathrm{Diff}(S^1)$-invariant as its definition is geometric and
does not depend on the parametrization of $c$ but only on its shape.

One of the main features and applications of the hyperbolic flow
(as well as of other geometric flows of curves) is the study of
the evolution of the shapes of the curves under it. We can now
suppose that we just want to study this evolution in the shape
space $\mathrm{Emb}(S^{1},\mathbb{R}^{2})/\mathrm{Diff}^+(S^1)$.
The natural Lagrangian in this situation becomes $l=\ell ^h$, the
projection of $L^h$ to this quotient space. In this context, the
un-reduction technique applies and we have the last result of this paper.

\begin{proposition}
The un-reduced equations for the system described above read
\begin{eqnarray*}
\partial_t h &=&D_\theta(vh)-\kappa (\tfrac{1}{2}h^{2}-1), \\
\partial_t v &=&F^{v}, \\
c_{t} &=& h{\bf n}+v {\bf t}.
\end{eqnarray*}
In particular, if we choose $F^v=0$ and the initial tangent
velocity to vanish ($v(0)=0$), then $v(t)=0$ for all times and the
velocity of $h$ is proportional to the curvature $\kappa$.

\end{proposition}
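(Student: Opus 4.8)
The plan is to apply the Un-reduction Theorem \ref{mainth} directly to the concrete geometric data of the hyperbolic flow, reading off the two equations of \eqref{eqsp} in the parametrisation afforded by the Frenet frame. First I would identify the ingredients: the principal bundle is $Q=\mathrm{Emb}(S^1,\mathbb R^2)\to\Sigma=Q/\mathrm{Diff}^+(S^1)$ with the connection $\mathcal A(c_t)=(c_t\cdot\mathbf t)\mathbf t$, the volume form is $\mathbf v=dt$ (so $N=\mathbb R$ and $\mathrm{div}^{\mathbf v}=\partial_t$), and the Lagrangian split $L=L^h+L^v$ is the one already displayed, with $L^h=\int_{S^1}(\tfrac12 h^2-1)\,dl$ horizontal and $L^v=\int_{S^1}\tfrac12 v^2\,dl$ vertical. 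Since $F^v$ is free and $F^h$ is fixed by \eqref{fh}, Theorem \ref{mainth} tells us the un-reduced dynamics is exactly \eqref{eqsp}: the Euler--Lagrange equation $\mathcal{EL}(L^h)(j^2 s)=0$ together with $\mathcal A^*\mathrm{div}^{\mathbf v}(\mathbf J(\delta L^v/\delta j^1 s))=F^v$.

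The bulk of the work is computing these two equations in the frame $c_t=h\mathbf n+v\mathbf t$. For the first equation I would vary $L^h$ only along horizontal variations $\delta c=\xi\mathbf n$ (legitimate because $L^h$ is horizontal, exactly as in the proof of Proposition \ref{ppop}), using $D_\theta\mathbf n=-\kappa\mathbf t$, $D_\theta\mathbf t=\kappa\mathbf n$, and the variation of the arclength $\delta(dl)$. This is the one-independent-variable analogue of the curve-matching computation already carried out above: the $\partial_t$ of the momentum $\delta L^h/\delta c_t$ contributes $\partial_t h$, the dependence of $\mathbf n$ and of $dl$ on $c$ produces the transport term $D_\theta(vh)$ together with the curvature term $-\kappa(\tfrac12 h^2-1)$ coming from the potential $-1$ and the quadratic part of $L^h$. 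For the second equation, $\delta L^v/\delta j^1 s=\delta L^v/\delta c_t=(v\mathbf t)\,dt$ as a $V^*Q$-valued object; its momentum map image and the pairing with $\mathcal A^*$ collapse, after using $\mathrm{div}^{\mathbf v}=\partial_t$ and that the vertical directions are tangent to the fibres, to the scalar equation $\partial_t v=F^v$. Then one appends the defining relation $c_t=h\mathbf n+v\mathbf t$, completing the stated system; and by the last sentence of Theorem \ref{mainth} these solutions project to solutions of $\mathcal{EL}(l)(j^2\rho)=0$, i.e. of hyperbolic curvature flow in shape space.

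For the final assertion: setting $F^v=0$ reduces the second equation to $\partial_t v=0$, so $v(t)\equiv v(0)=0$; feeding $v=0$ back into the first equation kills the transport term $D_\theta(vh)$ and leaves $\partial_t h=-\kappa(\tfrac12 h^2-1)$, which exhibits $\partial_t h$ as a multiple of $\kappa$ — this is just substitution and needs no separate argument.

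The main obstacle I anticipate is bookkeeping in the first variation: correctly tracking how the arclength element $dl=|c_\theta|\,d\theta$, the unit normal $\mathbf n=J c_\theta/|c_\theta|$, and the curvature $\kappa$ depend on $c$ under the variation $\delta c=\xi\mathbf n$, so that the tangential coefficient $v$ enters only through the $D_\theta(vh)$ term and the curvature term emerges with the correct sign and the correct coefficient $(\tfrac12 h^2-1)$ — note that it is the full horizontal integrand, potential included, that is differentiated, so the $-1$ genuinely contributes $+\kappa$. Keeping the horizontal projection consistent throughout (discarding tangential variations, which are exactly those along the fibres) is the delicate point; once that is done, everything else is routine, and the one-variable case is strictly simpler than Proposition \ref{ppop} since there is no $x$-dependence and no mixed coupling.
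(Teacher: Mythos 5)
Your proposal is correct and takes essentially the same route as the paper: the paper's own proof is a one-line appeal to the curve-matching derivation of Proposition \ref{ppop} specialised to the classical case $N=\mathbb{R}$, $\mathbf{v}=dt$, which is exactly the computation you carry out, with the only genuinely new ingredient (the potential term $-1$ contributing $+\kappa$ through the variation of $dl$ under $\delta c=\xi\mathbf{n}$) handled correctly. The concluding assertion about $F^v=0$, $v(0)=0$ is indeed pure substitution, as you say.
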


\begin{proof}
        Following the derivation of the un-reduced equation for curve matching, but in the classical case, one can prove this proposition as well.
\end{proof}

\begin{remark}
The equations of the Proposition above for $F^v=0$ are the
hyperbolic mean flow equations (see for example \cite{LeS} for a
good account of this flow). The usual approach in the literature
works in $\mathrm{Emb}(S^1 ,\mathbb{R}^2)$ and then restrict
oneself to the normal part of the flow. The approach here works
with shapes in $\mathrm{Emb}(S^1 \mathbb{R}^2)/\mathrm{Diff}(S^1)$
so that the trivial choice of $F^v=0$ gives directly the geometric
equations.
\end{remark}

\section*{Acknowledgements}
We are grateful to M. Bauer, S. Durrleman, R. Montgomery and T.
Ratiu for valuable discussions during the course of this work. 
We also want to thank H.Dumpty for helpful suggestions during the 
realisation of Fig.~\ref{fig:humpty}.  
AA acknowledges partial support from an Imperial College 
London Roth Award, AA and DH from the European Research 
Council Advanced Grant 267382 FCCA. MCL has been partially 
funded by MINECO (Spain) under projects MTM2011-22528 and 
MTM2010-19111. MCL wants to thank Imperial College for 
its hospitality during the visit in which the main ideas 
of this work were developed.


\begin{thebibliography}{9}

\bibitem{ArCaHo} A. Arnaudon, M. Castrill{\'o}n L{\'o}pez, D.D. Holm,
\emph{Covariant un-reduction for curve matching}, Conference paper for MFCA2015, arXiv:1508.05325 (2015)

\bibitem{BBMM} M. Bauer, M. Bruveris, S. Marsland, P.W. Michor,
\emph{Constructing reparameterization invariant metrics on spaces
of plane curves}, Differential Geom. Appl. {\bf 34} (2014),
139-–165.

\bibitem{BBM} M. Bauer, M. Bruveris, P.W. Michor, \emph{Overview of
the Geometries of Shape Spaces and Diffeomorphism Groups}, J.
Math. Imaging and Vision {\bf 50} (2014), 67--90.

\bibitem{BEBH} M. Bruveris, D.C.P. Ellis, D.D. Holm, F. Gay-Balmaz,
\emph{Un-reduction}, Journal of Geometric Mechanics {\bf 3} (2011), 363--387.

\bibitem{CaRa} M. Castrill\'on L\'opez, T.S. Ratiu, \emph{Reduction in Principal Bundles:
Covariant Lagrange-Poincar\'e Equations}, Comm. Math. Phys. {\bf 236} (2003), 223-–250.

\bibitem{CaPeRa} M. Castrill{\'o}n L{\'o}pez, P.L. Garc\'\i a, T.S. Ratiu,
\emph{ Euler--Poincar{\'e} reduction on principal bundles}, Lett. Math. Phys. {\bf 58} (2001), 167--180.

\bibitem{CeMaRa2001} Cendra, Hern{\'a}n and Marsden, Jerrold E and Ratiu, Tudor S
\emph{Lagrangian reduction by stages}, American Mathematical Soc.
{\bf 722}, (2001)

\bibitem{Cotter2012} C.J. Cotter, A. Clark, J. Peir{\'o}, \emph{A reparameterisation based approach to geodesic constrained
              solvers for curve matching},Int. J. Comput. Vis.
{\bf 99}, (2012)

\bibitem{CoHo}  C.J.  Cotter  and  D.D.  Holm, \emph{Geodesic  boundary  value
problems  with  symmetry},  J.  Geom. Mech. {\bf 2}, no. 1 (2010),
417--444.

\bibitem{Dai} Y.J. Dai, M. Shoji, H. Urakawa, \emph{Harmonic maps
into Lie groups and homogeneous spaces}, Diff. Geom. Appl. {\bf 7}
(1997), 143--160.

\bibitem{Dur} S. Durrleman, X. Pennec, A. Trouv\'e, J. Braga, G. Gerig, N. Ayache, \emph{Toward a
comprehensive framework for the spatiotemporal statistical
analysis of longitudinal shape data}, Int J Comput Vis. {\bf 103},
no. 1 (2013), 22--59.

\bibitem {EF2}  H. Eichenherr, M. Forger
\emph{More About Non-Linear Sigma Models on Symmetric Spaces},
Nuclear Physics B {\bf 164} (1980), 528--535.

\bibitem {EF3}  H. Eichenherr, M. Forger
    \emph{Higher Local Conservation Laws for Nonlinear Sigma Models on Symmetric Spaces},
Comm. Math. Phys {\bf 82} (1981), 227--255.

\bibitem{GBRa} D.C. Ellis, F. Gay-Balmaz, D.D. Holm, T.S. Ratiu,
\emph{Lagrange-Poincar\'e field equations}, Journal of Geometry
and Physics {\bf 61}, no. 11, (2011), 2120--2146.

\bibitem{Ger} G. Derig, B. Davis, P. Lorenzen, S. Xu, M. Jomier,
J. Piven, S. Joshi, \emph{Computational anatomy to asses
longitudinal trajectory of brain growth}, In Third International
Symposium on ·D Data Processing, Visualization and Transmission,
1041--1047.

\bibitem{HoVi} D.D. Holm, C. Vizman, \emph{Dual pairs in resonances}, J. Geom. Mech. {\bf 4}, no. 3 (2012), 297-–311.

\bibitem{Guest} M.A. Guest, \emph{Harmonic maps, loop groups, and integrable systems}, London
Mathematical Society Student Texts, 38. Cambridge University
Press, Cambridge, 1997.

\bibitem{Gim} M.J. Gotay, J. Isenberg, J.E. Marsden, R.
Montgomery, \emph{Momentum Maps and Classical Relativistic
Fields}, Unpublised notes, arXiv:physics/9801019.

\bibitem{KN} S. Kobayashi, K. Nomizu, \emph{Foundations of Differential Geometry}, Wiley Classics Library Vol. I, 1963.

\bibitem{MM} P.W. Michor, D. Mumford, \emph{An overview of the Riemannian metrics
on spaces of curves using the Hamiltonian approach}, Appl. Comput.
Harmon. Anal. {\bf 23} (1) (2007), 74–-113.

\bibitem{H} M. Higaki, \emph{Actions of Loop Groups
on the Space of Harmonic Maps into Reductive Homogeneous
Spaces}, J. Math. Sci. Univ. Tokyo {\bf 5} (1998), 401-–421.

\bibitem{LeS} P.G. LeFloch, K. Smoczyk, \emph{The hyperbolic mean
curvature flow}, J. Math. Pur App. {\bf 90} (2008), 591--614.

\bibitem{MaMoRa} J. Marsden, R. Montgomery, T. Ratiu, \emph{Reduction, symmetry, and
phases in mechanics}, Mem. Amer. Math. Soc. {\bf 88} (1990), no.
436.

\bibitem{MiMu} P.W. Michor, D. Mumford, \emph{Riemannian
geometries on spaces  of  plane  curves},  J. Eur. Math. Soc. {\bf
8} (2006), 1--48.

\bibitem{Mont} R. Montgomery, \emph{A tour of subriemannian geometries,
their geodesics and applications}, Mathematical Surveys and
Monographs, 91. American Mathematical Society, Providence, RI,
2002.

\bibitem{Pey} J.M. Peyrat, H. Delingette, M. Samersat, X. Pennec,
C. Xu, N. Ayachie, \emph{Registration of 4D Time-Series of cardiac
images with Multichannel Diffeomorphic Demons}, In Proc. Medical
Image computing and computer assisted Intervention, Springer, LNCS
{\bf 5242}, 972--979.

\bibitem{TiVi} F. T{\i}{\u{g}}lay, C. Vizman, \emph{
Generalized Euler--Poincar{\'e} equations on Lie groups and
homogeneous spaces, orbit invariants and applications}, L. Math.
Phys. {\bf 97} (2011), no. 1


\bibitem{Naka} S. Tanimura, M. Nakahara, D. Hayashi,
\emph{Exact solutions of the isoholonomic problem and the
optimal control problem in holonomic quantum computation}, J.
Math. Phys. {\bf 46} (2005), no. 2, 022101.

\bibitem{Wa} Z. Wang, \emph{Hyperbolic mean curvature flow with a
forcing term: evolution of plane curves}, Nonlinear Anal. {\bf 87}
(2014), 65--82.

\end{thebibliography}
\end{document}